\documentclass[aip,jcp,imp,amsmath,amssymb,reprint]{revtex4-1}
\usepackage{amsmath}
\usepackage{graphicx}
\usepackage{dcolumn}
\usepackage{amssymb}
\usepackage{epsfig}
\usepackage{wrapfig}
\newtheorem{theorem}{Theorem}[section]

\newenvironment{proof}[1][Proof]{\begin{trivlist}
\item[\hskip \labelsep {\bfseries #1}]}{\end{trivlist}}

\newcommand{\qed}{\nobreak \ifvmode \relax \else
      \ifdim\lastskip<1.5em \hskip-\lastskip
      \hskip1.5em plus0em minus0.5em \fi \nobreak
      \vrule height0.75em width0.5em depth0.25em\fi}

\begin{document}

\newcommand{\mps}{|\Psi\rangle}
\newcommand{\ha}{\frac{1}{2}}

\title{Dimensional Reductions for the Computation of Time--Dependent Quantum Expectations} 
\author{Giacomo Mazzi}
\email[Electronic Mail: ]{G.Mazzi@sms.ed.ac.uk}
\author{Benedict J. Leimkuhler}
\affiliation{School of Mathematics, University of Edinburgh, Edinburgh EH9 3JZ, UK}
\date{\today}
\begin{abstract}
We consider dimensional reduction techniques for the Liouville-von Neumann equation for the evaluation of the expectation values in a mixed quantum system. In applications such as nuclear spin dynamics the main goal for simulations is being able to simulate a system with as many spins as possible, for this reason it is very important to have an efficient method that scales well with respect to particle numbers.  We describe several existing methods that have appeared in the literature, pointing out  their limitations particularly in the setting of large systems.
We introduce a method for direct computation of expectations via Chebyshev polynomials (DEC) based on evaluation of a trace formula combined with expansion in modified Chebyshev polynomials. This reduction is highly efficient and does not destroy any information.  We demonstrate the practical application of the scheme for a nuclear spin system and compare with several alternatives, focusing on the performance of the various methods with increasing system dimension. Our method may be applied to autonomous quantum problems where the desired outcome of quantum simulation, rather than being a full description of the system dynamics,  is only the expectation value of some given observable.

\end{abstract}

\maketitle
\section{Introduction}

The motion of a quantum system of particles is described by the Schr\"odinger equation :
\begin{equation}\label{schro0}
 \frac{\partial \mps}{\partial t}=-i H \mps, 
\end{equation}
where $H$ is the Hamiltonian and $\mps$ the wave function of the system, and we have chosen physical quantities such that $\hbar=1$.
When $H$ is time independent it is possible to have an exact representation of the solution:
\begin{equation}\label{schro}
|\Psi(t)\rangle=e^{-iH t}|\Psi_0\rangle.
\end{equation}
From a numerical point of view finding a solution of (\ref{schro}) is a challenging task, due to the cost of computing the operator exponential $e^{-i H t}$.
Many numerical methods have been proposed over the years to solve (\ref{schro}), from polynomial expansion of the exponential \cite{tal,vijay,spinev}, to projection on Krylov subspaces \cite{moro,park}. When the wave function $\mps$ depends on the particle positions $q$ and momenta $p$ it is very common to adopt a splitting method for the Hamiltonian, $H=T+V$ being  $V$ diagonal and $T$ the kinetic term which is diagonal in Fourier space. With this approach the main issue is the evaluation of a fast Fourier transform (FFT) to switch between the $x$-grid and the $p$-grid; the cost of this transform is of order $O(N\log N)$, \cite{hardin}. 
However when the wave function depends on other variables, like the spin of the particles, it is not straightforward to apply a splitting method.
For a general survey of these approaches see Refs.~\onlinecite{leforestier,lubichbok}.

It is important to remark that all of these methods suffer when the dimension of the space  needed to represent the wave function $\Psi$  accurately becomes large.
In many practical cases, as for example in spin dynamics, the complexity of the computational problem grows exponentially with the number of atoms. For pure spin dynamics $\Psi$ may be represented by a vector in a space of dimension $(2I+1)^n$, where $n$ is the number of particles with spin$I$ present.

This paper is organized as follows:  in \S2 we introduce the Liouville--von Neumann equation that describes the dynamics of the density matrix, we also clarify the issues that complicate its numerical solution. In \S3 we discuss two of the main techniques that have been applied in quantum simulation for the evaluation of the exponential that appears in the solution of the Schrodinger equation, specifically Krylov expansion via Lanczos--Arnoldi iterations and the expansion in Chebyshev polynomials.   We also present a more recent method based on a different evaluation of the Krylov subspace expansion called Zero Track Elimination (ZTE) \cite{kuprov}. This method  has been developed for simulations of nuclear spin dynamics within the Nuclear Magnetic Resonance (NMR) community. We give what we believe to be the first mathematical discussion of the method, exploring its limitations and clarifying situations in which it works.

In \S4 we introduce a new method we have developed: the Direct Expectation values via Chebyshev (DEC). This is the main result of this paper.   In this section we also discuss the technical details of the implementation of the DEC method.

In \S5 we provide a performance comparison of DEC with three separate alternatives: the classic Krylov expansion based on Lanczos algorithm, with the new ZTE method and with the Chebyshev expansion. The sample system considered is a pairwise interacting nuclear spin system; even if it is a simple system it contains the key features of the systems of interest to the NMR community.

\section{The Liouville von--Neumann Equation}
When the sample of interest is composed of a large number $N$ of identical systems each described by a wave function $|\Psi^j \rangle$, $j=1,\ldots,N$, we may introduce a statistical operator called density operator $\varrho$ defined by:
\begin{equation}
\varrho=\sum_{j=1}^N |\Psi^j\rangle\langle\Psi^j|,
\end{equation}
which evolves according to the Liouville-von Neumann equation:
\begin{equation}\label{liovvn}
 \frac{\partial \varrho}{\partial t}=-i[H,\varrho].
\end{equation}
If $\varrho$ is expanded using a (finite) approximate basis set $\{|\varphi_1\rangle\ldots |\varphi_n\rangle\}$,  (\ref{liovvn}) may be viewed as an ordinary differential equation in a matrix argument.
In the time independent case the solution for (\ref{liovvn}) may be written:
\begin{equation}\label{sol}
 \varrho(t)=e^{-iH t}\varrho_0 e^{iH t}.
\end{equation}

It is possible and sometimes preferable to rewrite (\ref{sol}) by use of the 
Liouvillian $L= \rm{Id} \otimes H-H \otimes \rm{Id}$, where $\rm{Id}$ is the identity matrix,  allowing us to recast $\varrho$ as a vector:
\begin{equation}\label{liov}
\varrho(t)=e^{-iL t}\varrho_0.
\end{equation}
The main issue is then to evaluate the exponential of the matrix $L$.
For a large system and using a typical basis set, $H$ (and consequently $L$) will be a structured sparse matrix.

While the evolution of the system is described by the density matrix, the outputs we are  typically interested in obtaining from quantum simulations are
the expectations of observables, these being the only quantities we can compare with the experiments.
In the density matrix formalism the expectation value of an observable $Q$, associated with an operator $\hat{Q}$ is written as:
\begin{equation}\label{obs}
 \langle \hat{Q}(t) \rangle={\rm Trace} \{\varrho(t) \hat{Q}\}.
\end{equation}
Whereas in general quantum simulations the equation of motion is solved for a quantity, $\varrho$, which has dimension $n \times n$, 
the types of outputs we are actually interested in are typically one dimensional objects (\ref{obs}).
In this paper we exploit this fact and design an algorithm that computes (almost) directly the evolution of the expectation value (\ref{obs}),
instead of the evolution of the density matrix (\ref{sol}).
This approach does not lose any information of the original system (the only errors arise due to truncation), but at the same time the method
provides a powerful computational tool,  with potential dramatic reduction in the computational cost, especially when dealing with large matrices.
The main idea of this approach is to exploit features of a Chebyshev expansion for the matrix exponential in (\ref{liov}). 

Several methods to evaluate (\ref{liov}) are discussed in a recent monograph \cite{lubichbok}; however our  proposed {\em Direct evaluation of the Expectation values via Chebyshev polynomials} (DEC) method  is different because it does not solve the evolution for the density matrix. Instead it exploits the trace evaluation in (\ref{obs}) and with the evaluation of just one Chebyshev expansion it allows the solution of (\ref{obs}) at any time. 
DEC can be extremely powerful when we are only interested in the expectation values; if instead it is necessary to evaluate the evolution of the density matrix itself (\ref{liov}), then a traditional approach might be the best choice.

\section{Existing Methods}

\subsection{Krylov expansion via Lanczos--Arnoldi}
In the past decades many methods have been developed for numerical evaluation of the matrix exponential, \cite{nineteen}.
However for large sparse matrices, the methods usually applied are those based on expansion in Krylov subspace \cite{saad1,lubich1,schulze,bai}.
The main idea is to project (\ref{liov}) onto the subspace:
\begin{equation}\label{kry_sub} 
K_m(L,\varrho)={\rm span} \{\varrho_0,L\varrho_0,L^2\varrho_0,\ldots,L^m\varrho_0\}.
\end{equation}
To get a suitable basis set of (\ref{kry_sub}) we may use the Lanczos algorithm \cite{lancz}, as  $L$ is Hermitian.
The Lanczos method is an iterative method, a very desirable feature in the context of large sparse matrices.
For specific details see Ref.\cite{golubok,cullum}.

An approximation for $\varrho(t)$ is:
\begin{equation}\label{kryrho}
\varrho(t)=e^{-i L t}\varrho_0 \approx \|\varrho_0\| V_m e^{-i T_m t} e_1, 
\end{equation}
where $T_m$ and $V_m$ come from the Lanczos algorithm. The Lanczos algorithm provides an orthonormal basis set $V_m$ for the Krylov subspace $K_m(L,\varrho_0)$ via a three-term recursion \cite{lancz, cullum}:
\begin{equation}\label{lancz}
 \beta_{j+1}q_{j+1}=Lq_j-\alpha_jq_j-\beta_j q_{j-1} \qquad q_1=\varrho_0.
\end{equation}
$T_m$ is a tridiagonal matrix of size $m \times m$, and $e_1$ is the first vector of the canonical basis of size $n$.
This technique is very powerful for short time simulations, because with few iterations $m$ it is possible to have remarkably good approximations, but for longer times larger Krylov subspaces would be needed to stay close to the real solution. On the other hand if we do not consider enough terms in the Lanczos algorithm for longer times, (\ref{kryrho}) is no longer a reliable approximation. 

It is possible to set a stopping criterion for the Lanczos iterations \cite{lubich1}; for a given $t$ we can find $m$ such that:
\begin{equation}\label{stopcrit}
t [T_m]_{m+1,m} \|e^{-i tT_m}\|_{m,1}\leq \varepsilon.
\end{equation}

In our experiments the best way to implement a Krylov expansion was to evaluate each step:
 \begin{equation}
  \varrho_{n+1}=e^{-iL dt}\varrho_n\approx \|\varrho_n \| V_m^n e^{-i T_m^n t} e_1.
 \end{equation}
In this way with less than $10$ iterations of Lanczos per step it is possible to have a fast and accurate benchmark. The obvious drawback is that no information passes from step $n$ to step $n+1$. However in our numerical tests the use of a longer timestep that would allow a common Krylov subset $K_m(L,\varrho)$ for more than one step $\varrho_n$ was not preferable as more iterations of (\ref{lancz}) were then required.   As the equation (\ref{stopcrit}) involves the evaluation of the exponential of a tridiagonal matrix, when $m$ becomes large, this operation may become a serious bottleneck for the whole simulation. 

\subsection{The Chebyshev expansion}\label{cheby}
Another method that has been widely applied for solving (\ref{schro0}) is the expansion of the exponential in Chebyshev polynomials \cite{tal,vijay}.

The preliminary step of this method is to rescale the matrix within the interval $[-1,1]$, as outside this interval the Chebyshev polynomials grow rapidly, and the expansion becomes unstable; to do that we need to evaluate the two extremes of the spectrum of $L$.   
In order to obtain extreme values we propose, as already mentioned in the literature \cite{saad_lanc}, to perform a few steps of Lanczos iteration, as this provides a good approximation for the extreme eigenvalues, for small computational cost. 
If we define these two values as $\alpha$ and $\beta$, and assume $\beta\leq \sigma(L)\leq \alpha$, we may rewrite $L$ as $L=(S\,{\rm Id}-L_sD)$, where $D=(\alpha-\beta)/2$, $S=(\alpha+\beta)/2$, and $-1\leq\sigma(L_s)\leq 1$. We may then expand the exponential of $L_s$ in the Chebyshev polynomials and we
arrive at the following equation for $\varrho$:
\begin{equation}\label{chebexp}
\varrho(t)=e^{-iLt}\varrho_0\approx e^{-itS}\left(\sum_{k=0}^{n_{\rm max}}c_k(t_D) T_k(L_s)\varrho_0\right),
\end{equation}
with $t_D=Dt$.
Both $c_k(t_D)$ and $T_k(L_s)$ can be calculated iteratively:
\begin{equation}\label{ck}
 c_k(t)=(2-\delta_{k,0})(-i)^k J_k(t),
 \end{equation}
 \begin{equation}\label{Tk}
T_{k+1}(x)=2T_k(x)x-T_{k-1}(x),
\end{equation}
with initial values $T_0(x)={\rm Id}$, $T_1(x)=x$.
$J_k(t)$ is the $k$-th Bessel function of the first kind.

The Bessel Functions of the First Kind  of integer order may be evaluated directly by using a three-term recurrence relation
\begin{equation}\label{besselj}
J_{n+1}(t)=\frac{2n}{t}J_n-J_{n-1}.
\end{equation}
It is well known that  (\ref{besselj}) becomes numerically unstable for $n>t$, \cite{backward}.
To improve the method, we may exploit the linear nature of the iterative algorithm. It is possible to use  Miller's algorithm, and to solve an inverted form of (\ref{besselj}), i.e. to compute $J_{n-1}$ given $J_n$, $J_{n+1}$ \cite{backward}. When using  Miller's Algorithm it is suggested to expand the number of  terms (providing a sort of buffer), i.e. to start the backward iteration process from $m_{\rm start}=n+r$, where $n$ is the actual order of the function we are interested on and $r$ is some small expansion.  In this case we need to know already from an a priori error analysis how many iterations need to be performed to to get below the threshold $\varepsilon$.

It is possible to prove that for the rescaled Hermitian matrix $L_s$, when applied to a vector of unit Euclidian norm, we have \cite{lubichbok}:
\begin{equation}\label{errlub}
\|P_{m-1}(tL_s)\varrho_0-e^{-it L_s}\varrho_0\|\leq 4(e^{1-(t/2m)^2}\frac{t}{2m})^m, 
\end{equation}
 for $m>t$, where $P_m(t)$ is the order $m$ expansion in Chebyshev polynomials.
This equation indicates that there is a superlinear decay of the error when $m> t$.  The formula may be derived by examining the asymptotic behaviour of the Bessel functions (\ref{asympt}).
We may then use the relation $4(\exp\{1-(\tau/2m)^2\}\frac{\tau}{2m})^m\leq \varepsilon$ to approximate $m$.

From a practical point of view the usual way of applying Chebyshev is to evaluate \cite{spinev,tal}:
\begin{equation}\label{cheb3}
\varrho_{n+1}=e^{-i L dt }\varrho_n\simeq P_{n_{\rm max}}(dt L) \varrho_n,
\end{equation}
where:
\begin{equation}
P_{n_{\rm max}}=e^{-idt S} \sum_{k=0}^{n_{\rm max}}c_k(dt D) T_k(L_s).
\end{equation}
$m$ can be evaluated either from (\ref{errlub}) or directly checking the convergence (\ref{c_max1}). 

To avoid numerical instabilities coming from the iterative formula for the Bessel functions it is also possible to get $P_{m}$ for a given $m$ via the Clenshaw Algorithm \cite{clens,lubichbok}:
\begin{equation}
d_k=c_k \varrho_n+2 L_s d_{k+1}-d_{k+2}, \, k=m-1,m-2,\ldots,0,
\end{equation}
with initial values $d_{m+1}=d_m=0$, and $P_m(dt L)\varrho_0=d_0-d_2$.

\subsection{The Zero--Track--Elimination method}
Nuclear Magnetic Resonance (NMR) is a spectroscopy technique that exploits the interaction between nuclear spins and electromagnetic fields in order to analyse the samples. The temporal evolution of such a system is described via a density matrix that has size $(2I+1/2)^n$ where $I$ is the spin and $n$ the number of nuclei. The exponential growth of the size of $\varrho$ with respect to $n$ impedes the use of simulations when dealing with systems involving more than few ($5$-$6$) spins. Many attempts have been made to solve this (see Refs.~\onlinecite{simpson,kuprov} for recent approaches), even using Chebyshev polynomials \cite{spinev}.
These algorithms have been developed to simulate both liquid systems, where the Hamiltonian is generally time independent, and for Solid--State NMR.
In the last case the Hamiltonian is time dependent due to the non averaging out of  anisotropic interactions during the motion of the sample, and as previously remarked DEC is not applicable in this case.

Recently also a new method for model reduction in the simulation of large spin systems, so called Zero Track Elimination (ZTE),  has been presented, \cite{kuprov}.  This technique is based on the idea of pruning out the elements of $\varrho(t)$ which do not belong to $K(L,\varrho_0)$.
In practice, the initial density matrix $\varrho_0$ is a sparse vector.

In order to reduce the steps needed to evolve the full system, we monitor the elements of  $\varrho(t)$ that stay below a chosen threshold $\xi$ during this first evolution steps and introduce structural zeros based on these observations.
The evolution is then performed in this reduced state space $(\varrho_Z,L_Z)$. 
The idea is extremely appealing, as once the propagator for $L_z$ is evaluated all the subsequent steps have the cost of a reduced matrix--vector, and it is possible to use  (\ref{prop}) for the reduced system.

The initial time length is set as the inverse of the largest Larmor frequency. The Larmor frequency is a given quantity for each element that depends on the physical property of the nucleus and is the frequency of resonance for a non--interacting spin: $\omega_j^0=-\gamma_j B$, where $-\gamma_j$ is the gyromagnetic ration of the nucleus and $B$ the applied magnetic field.

The time--length of the initial check is then:
\begin{equation}\label{larmin}
\delta t=\frac{1}{t_{\rm lar}}=\frac{2\pi}{min_j\{|\omega_j^0|\}} 
\end{equation}
where $\omega^0_j$ is the Larmor frequency of the $j$-th spin.
The following theorem given in \cite{kuprov} assures that it is possible to prune out from the evolution those states that remain 
exactly $0$ during the first few time steps.
It is possible to prove that for a state $|l\rangle$ there holds the equation:
\begin{equation}\label{ztt}
\langle l|e^{-iLt} |\varrho_0\rangle=0,\, t\in [0,\delta t] \Rightarrow  \langle l|e^{-iLt} |\varrho_0\rangle=0,\,  t\in [0,\infty)
\end{equation}

To prove (\ref{ztt}) it is sufficient to expand $e^{-iL t}$ into a Taylor expansion.
In fact:
\begin{equation}
 \langle l |e^{-iL t}|\varrho_0\rangle=\langle l|\sum_{k=0}^{\infty} L^k \frac{(-it)^k}{k!}| \varrho_0\rangle= \sum_{k=0}^{\infty}\frac{(-it)^k}{k!}  \langle l| L^k\varrho_0\rangle, 
\end{equation}
can be true for any $t \in [0,\delta t]$ only if $\langle l| L^k|\varrho_0\rangle=0$ for all the $k$, but this means that $\langle l |e^{-iL t}|\varrho_0\rangle=0, tt\in [0,\infty)$.

From a practical point of view few states $\langle l|$ obey (\ref{ztt}), but a much higher number of states will stay \textit{close} to $0$ during the first $j$ time steps, where $j=\delta t/dt$.
So the states $\langle l |$ that are pruned out are those for which:
\begin{equation}\label{thin}
 \langle l |e^{-iL t}|\varrho_0\rangle<\epsilon,\quad t \in [0,\delta t]
 \end{equation}

It is claimed \cite{kuprov} that the error of such an approximation is similar to what would be obtained by considering in the Krylov expansion the contributions coming from high values of $n$ in $L^n\varrho_0$.

The main problem of this approach is the choice of $\delta t$, i.e. the duration of the initial propagation.
In fact if we look at a full diagonalization of $L$ we see that:
\begin{equation}\label{diag}
\varrho(t)=X e^{-iD t}X^{-1}\varrho_0,
\end{equation}
so each element of $\varrho(t)$ can be written as a sum of oscillators vibrating at the different $\lambda_j  \in \sigma(L)$.
\begin{equation}\label{osc}
\varrho_k(t)=\sum_{j=1}^n X_{[k,j]}\mu_j e^{-i\lambda_j t }, \qquad \mu_j=\sum_{l=1}^N  X_{[j,l]}\varrho_0^l.
\end{equation}
From (\ref{osc}) it is clear that to ensure that we are not pruning out the low frequencies modes we would need at least  $\delta t \propto  1/\min_j \{|\lambda_j| \} $, and this could be different from the lowest Larmor frequency $\min_j\{|\omega_j^0|\}$ as this last is a quantity related to the non--interacting spins.

 It is possible to have an estimate of the lowest eigenvalue of $L$ using a technique like the one we used for the Chebyshev expansion (\ref{cheby}). The simple choice of $\delta t$ depending on $\lambda_j$ rather then $\omega_j^0$  is not enough to ensure the validity of (\ref{thin}).

Within this framework we can restate (\ref{thin}) in a simpler form for a one dimension function and prove that:
\begin{theorem}\label{zteth}
Given a function $f(t)=\sum_{j=1}^n e^{-\lambda_j t} \mu_j$ s.t. $|f(t)|<\epsilon$ for $t\in[0,2\pi/\min_j\{\lambda_j\}]$, this is not sufficient to ensure that $|f(t)|<\epsilon$ for $t\in[0,\infty)$.
\end{theorem}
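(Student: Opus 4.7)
The plan is to prove non-sufficiency by exhibiting a single explicit counterexample. Reading the statement in the context of equation~(\ref{osc}), I interpret the exponents as oscillatory, $e^{-i\lambda_j t}$ with real $\lambda_j$, since for genuinely decaying real exponentials with $\lambda_j>0$ the function $f$ is monotonically controlled at later times and the claim would be vacuous. The oscillatory reading is the one that matches the spectrum of the Hermitian Liouvillian used in the preceding diagonalisation argument and it is exactly this case that the authors wish to attack.

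The construction I would use has two nearby frequencies, chosen so that the beat period is much longer than the ZTE probe interval. Specifically, take $n=2$, $\mu_1=1$, $\mu_2=-1$, $\lambda_1=\omega$, $\lambda_2=\omega+\delta$ with $0<\delta\ll\omega$, giving
\begin{equation*}
f(t)=e^{-i\omega t}-e^{-i(\omega+\delta)t}=e^{-i\omega t}\bigl(1-e^{-i\delta t}\bigr),
\end{equation*}
so that $|f(t)|=|1-e^{-i\delta t}|$ depends only on the beat variable $\delta t$. This reduction to a one-parameter modulus is the crucial simplification that makes the bounds below immediate.

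Next I would estimate $|f|$ on the probe interval $[0,T]$ with $T=2\pi/\min_j\lambda_j=2\pi/\omega$. The elementary bound $|1-e^{-i\delta t}|\le \delta t$ yields $|f(t)|\le 2\pi\delta/\omega$ for every $t\in[0,T]$, so choosing any $\delta$ satisfying $\delta/\omega<\epsilon/(2\pi)$ forces $|f|<\epsilon$ throughout the prescribed window. Finally, evaluating at $t^\star=\pi/\delta$ gives $|f(t^\star)|=|1-e^{-i\pi}|=2$, which exceeds $\epsilon$ for any sensible tolerance, and $t^\star/T=\omega/(2\delta)\gg 1$ confirms that the violation occurs far beyond the probe interval.

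I do not expect a real technical obstacle here; the construction is a two-line beat example and the estimates are elementary. The subtle point, which I would spell out explicitly to close the loop with the discussion preceding the theorem, is \emph{why} the example works: the smallness of $|f|$ on the initial window is governed by the \emph{differences} $|\lambda_j-\lambda_k|$ of eigenvalues, whereas the ZTE step $2\pi/\min_j\omega_j^0$ is chosen from the minimum frequency itself. When $L$ has clustered eigenvalues, the relevant beat period $2\pi/|\lambda_2-\lambda_1|$ can be arbitrarily longer than the probe interval, which is precisely the failure mode the counterexample exposes.
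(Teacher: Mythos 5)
Your proposal is correct and follows essentially the same route as the paper: a counterexample built from clustered frequencies whose amplitudes cancel at $t=0$ (the resonance condition $\alpha=\sum_j\beta_j$), stay small over the probe window because the beat period $2\pi/\delta$ far exceeds $2\pi/\min_j\lambda_j$, and then realign to a large maximum. Your two-frequency version $e^{-i\omega t}-e^{-i(\omega+\delta)t}$ is just the $n=2$ special case of the paper's general form (the appendix uses a three-term instance with frequencies $1$, $1.001$, $0.999$), and your explicit bound $|1-e^{-i\delta t}|\le\delta t$ makes the "small enough $\delta$" step more rigorous than the paper's informal assertion.
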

\begin{proof} To prove (\ref{zteth})  we can check that if the $\lambda_j$ are not well separated then it is possible to have a combination of similar frequencies that build up on a total frequency that is the lowest common multiplier of the initial frequencies. In the general form if for one or more $\varrho_k$ we have the resonance condition:
\begin{equation}\label{examp}
\varrho_k(t)=\alpha e^{-i2\pi\lambda t}-\sum_j \beta^j e^{-i2\pi(\lambda +\delta^j)t},
\end{equation}
and $\beta_j,\alpha >0$ rationals such that $\alpha=\sum_j  \beta_j$. $\exists$ small enough $\delta_j$ s.t. we have that $|\varrho_k(t)|<\epsilon$,  $t \in[0,\delta t]$ but  at the periodic maximum  $ t >\delta t$ s.t. $|f(t)|=\alpha+\sum_j \beta_j \gg \epsilon$.
\end{proof}

In the appendix we present one example of such a function.

In order to ensure the validity of (\ref{thin}) even with $\delta t$ that depends on $\sigma(L)$ rather than the Larmor frequencies, we need to check the separation of the eigenvalues, and obviously an analysis of that kind would be as expensive as a whole simulation. 

In practice, it might be argued that we are unlikely to have a dynamics like (\ref{examp}), but especially with systems with many spins, a wide variety of behaviors are certainly possible in the general situation.

Focusing on NMR simulations however the reason why ZTE performs so well \cite{kuprov} lies in the fact that the numerical comparison with other methods is not performed on $\varrho(t)$ but on the observables (\ref{obs}).
In particular for experimental reasons the only quantity that can be compared with experiments is the {\em free induction decay} (FID) signal:
\begin{equation}\label{fid}
f(t)={\rm Trace } \left\{\varrho(t) I_p\right\}.
\end{equation}
$\varrho(t)$ can be written as combination of Pauli matrices, and $I_p$ is the shift up operator: $I_p=I_x+iI_y$. 
The Fourier transform of $f(t)$ gives then the spectrum of the sample, where each resonance frequency is revealed by a peak.
Due to relaxation effects in the experiments the shape of the resonance peaks is not a delta function (as would result from (\ref{osc}) but is a smooth function.
To model this smoothness also for the simulated data it is common practice to evaluate the Fourier transform not of (\ref{fid}) but for an exponentially decaying function $\tilde{f}(t)$:
\begin{equation}\label{exp}
\tilde{f}(t)=e^{-\xi t} f(t),
\end{equation}
where the parameter $\xi$ comes from other fitted data \cite{Levitt}.
The main effect of (\ref{exp}) is exactly to smooth out all the low frequency modes that will differentiate $\varrho(t)$ from  $\varrho_Z(t)$.

There are however some drawbacks: 
\begin{itemize}
\item for this method there is no available convergence theory;
\item the performance depends strongly from the initial condition $\varrho_0$, and on $H$. As expected, in our tests the size of the reduced system could change by a factor $2$ depending on the number of interacting spins. The reason for this effect comes from the fact that the less sparse $L$ is, the more non--zero states will appear within the first steps, thus reducing the effectiveness of the reduction $L_z$.
\item Another reason of the strong dependence of ZTE with respect to the initial conditions comes from (\ref{larmin});  depending on the Larmor frequencies, and on the timestep size the number of evolution steps at the beginning can become large, and this being the most expensive part of the simulation the influence of it on the total computation costs can become significant.
\end{itemize}

\section{The Direct Expectation values via Chebyshev}
The common point of all the methods presented in the previous section is that they involve the propagation of the matrix $\varrho$, and for this reason 
they suffer from requiring that matrix operations (or matrix-vector operations) be performed at each step of calculation.
Let us recall that the only quantities that can be compared between quantum simulations and experiments are the observables, for an operator $\hat{Q}$ we have:
\begin{equation}\label{obs1}
 \langle \hat{Q}(t) \rangle={\rm Trace} \{\varrho(t) \hat{Q}\}.
\end{equation}
The first step of DEC is to perform a Chebyshev expansion as seen in section \ref{cheby}, to get:
\begin{equation}\label{chebexp1}
\varrho(t)=e^{-iLt}\varrho_0\approx e^{-itS}\left(\sum_{k=0}^{n_{\rm max}}c_k(t_D) T_k(L_s)\varrho_0\right),
\end{equation}
If we insert (\ref{chebexp1}) into (\ref{obs1}) we find:
\begin{equation}\label{obs_cheb}
\langle \hat{Q}(t)\rangle={\rm Trace}\left\{ \left(e^{-i t S}\sum_{k=0}^{n_{\rm max}}c_k(t_D) T_k(L_s)\varrho_0\right) \hat{Q}\right\}.
\end{equation}
By exploiting the linearity of the trace operation we can pull out of the trace all time dependent parts, and evaluate once 
for all the coefficients $T_k(L_s)$.
In fact we may rewrite  (\ref{obs_cheb}) as:
\begin{equation}\label{obs_cheb2}
 \langle \hat{Q}(t)\rangle=e^{-itS}\sum_{k=0}^{n_{\rm max}}c_k(t_D) {\rm Trace}\left\{ T_k(L_s)\hat{Q}\right\}.
\end{equation}
This is the key equation of the DEC method as it is possible to store an array of scalar values $\tilde{T}_k={\rm Trace}\{(T_k\varrho_0)\hat{Q}\}$. All the time dependent terms are just scalar values that have to be multiplied by $\tilde{T}_k$ to get the evolution of $\hat{Q}$ at any time:
\begin{equation}\label{obs_cheb3}
 \langle \hat{Q}(t)\rangle=e^{-itS}\sum_{k=0}^{n_{\rm max}}c_k(tD) \tilde{T}_k.
\end{equation}

If more than one observable is required it is still possible to use DEC. The only difference with the single expectation case is that we need to store different sets of $T_k$, one for each operator $\hat{Q}$.

\subsection{Stopping Criterion}
The number of terms for the polynomial expansion in (\ref{chebexp}) depends on a prescribed tolerance $\varepsilon$, and on the time $t_D$. In other methods based on Chebyshev approximation \cite{spinev}, the following has been suggested as a stopping criterion:
\begin{equation}\label{c_max}
n_{\rm max}\quad {\rm s.t.}\quad \|c_{n_{\rm max}}(t_D)\|<\varepsilon.
\end{equation}
Due to the zeros of the Bessel function $J$, at fixed time $t_D$, (\ref{c_max}) may hold for some $n$, even though the expansion has not yet reached the convergence regime; it may happen that for $n_1>n$ we have that $c_{n_1}(t_D)>c_n(t_D)$.
To avoid this effect it is enough to use as a stopping criterion a combination of two Bessel functions; the cost of such a stopping criterion is that at most we need to perform an extra iteration step (\ref{ck}).
In our numerical tests we have used the following:
\begin{equation}\label{c_max1}
 n_{\rm max}\, {\rm s.t.}\, \sqrt{\|c_{n_{\rm max}-1}(t_D)\|^2+\|c_{n_{\rm max}}(t_D)^2\|}<\varepsilon.
\end{equation}
The total time $\tau$ plays a role here, since the larger $\tau$ the more terms $(T_k,c_k)$ will be needed to get $| c_k|$ below
 the threshold $\varepsilon$.

\subsection{Computation of the Expansion}
In order to optimise the number of terms we evaluate, but without having to check at each step whether we have already evaluated enough terms $T_k$,  we propose to evaluate first $\langle \hat{Q}(t)\rangle$, at the final time $\tau$, and to store the $N_{\rm max}$ values of $\tilde{T}_k$.
We can prove that:
\begin{theorem}
Given a Chebyshev expansion for an exponential $e^{-iL t}\varrho_0$ if (\ref{c_max1}) holds for a given time $\tau$ and small enough $\varepsilon$ then (\ref{c_max1}) holds for any time $t\leq \tau$. 
\end{theorem}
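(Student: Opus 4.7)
The plan is to prove the stronger monotonicity statement that the function
\[
\phi(t):=\sqrt{\|c_{n_{\rm max}-1}(tD)\|^2+\|c_{n_{\rm max}}(tD)\|^2}
\]
is nondecreasing in $t$ on $[0,\tau]$. Since the hypothesis gives $\phi(\tau)<\varepsilon$, monotonicity immediately yields $\phi(t)\leq\phi(\tau)<\varepsilon$ for every $t\leq\tau$, which is precisely the stopping criterion (\ref{c_max1}) at time $t$.

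First, using (\ref{ck}), I would reduce the question to monotonicity of $t\mapsto|J_k(tD)|$ on $[0,\tau]$ for $k\in\{n_{\rm max}-1,n_{\rm max}\}$, because $\|c_k(tD)\|=(2-\delta_{k,0})|J_k(tD)|$. Next I would exploit the fact that the stopping rule has actually been triggered at $\tau$ to force $n_{\rm max}$ into the asymptotic regime $n_{\rm max}\geq \tau D+1$. This lower bound is built into the derivation of (\ref{errlub}): the estimate $|J_k(x)|\lesssim (ex/2k)^k/\sqrt{2\pi k}$ that controls the tail is only sub-unit once the index $k$ exceeds the argument $\tau D$, so for the ``small enough'' $\varepsilon$ allowed by hypothesis the threshold cannot be crossed until we are past the turning point $k=\tau D$.

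With $n_{\rm max}-1\geq \tau D$ in hand, I would invoke the classical fact that for integer order $\nu\geq 1$ the first positive critical point $j'_{\nu,1}$ of $J_\nu$ satisfies $j'_{\nu,1}\geq\sqrt{\nu(\nu+2)}>\nu$, and that $J_\nu$ is nonnegative and monotonically nondecreasing on $[0,j'_{\nu,1}]$ (see, e.g., Watson's treatise on Bessel functions). Since $\tau D\leq n_{\rm max}-1<j'_{n_{\rm max}-1,1}\leq j'_{n_{\rm max},1}$, both $|J_{n_{\rm max}-1}(tD)|$ and $|J_{n_{\rm max}}(tD)|$ are nondecreasing in $t$ on $[0,\tau]$. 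Squaring, adding, and taking the square root preserves this property, so $\phi$ is nondecreasing and the theorem follows.

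The main obstacle is the first step, that is, extracting the a priori bound $n_{\rm max}\geq \tau D+1$ from the scalar inequality (\ref{c_max1}) at the single time $\tau$. In the pre-asymptotic range $k<\tau D$ the values $J_k(\tau D)$ oscillate with amplitude of order $1/\sqrt{\tau D}$, so a criterion of the form $|c_k|<\varepsilon$ alone could be satisfied spuriously near a Bessel zero; the combined two-term form of (\ref{c_max1}) was introduced precisely to rule this out, and one must verify that two consecutive $J_k$ cannot both be simultaneously small before the super-exponential decay of (\ref{errlub}) has set in. Once this pre-asymptotic exclusion is made rigorous, the classical monotonicity of $J_\nu$ below its first turning point closes the argument.
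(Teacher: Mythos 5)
Your proof follows essentially the same route as the paper's: both arguments rest on the monotonicity of $J_\nu$ on $[0,j'_\nu]$ together with the classical bound $j'_\nu\ge\nu$ (the paper cites Abramowitz--Stegun Eq.\ 9.5.2, you cite Watson), so that once the order exceeds the scaled time argument the two-term criterion at $\tau$ propagates back to every $t\le\tau$. The obstacle you flag --- rigorously extracting $n_{\rm max}\ge \tau D$ from the stopping rule and the ``small enough $\varepsilon$'' hypothesis --- is precisely the point the paper also leaves informal, as its proof simply adds ``and $\tau\le n_{\rm max}$'' as an assumed condition rather than deriving it.
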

\begin{proof} From equation (\ref{ck}) it is clear that $c_k$ depends on the Bessel functions.  If we look at the asymptotic behaviour of the Bessel function of fist kind, for any $k\in N$, we have that, for $k$ fixed \cite{handbok}:
\begin{equation}\label{asympt}
J_k(t)\sim \frac{1}{\Gamma(k+1)}\left(\frac{t}{2}\right)^k, \qquad  \lim t\rightarrow 0,
\end{equation}
where $\Gamma(t)$ is the Euler--$\Gamma$ and for $n\in Z$ we have that $\Gamma(n)=(n-1)!$.
Equation (\ref{asympt}) shows that for any $k\neq 0$, in a neighbourhood of $t=0$, $J_k(t)$ is increasing monotonically with respect to $t$.
This behaviour is maintained for the whole interval $[0,j_k']$ where $j_k'$ is the first zero of the derivative of $J_k$.
It is possible to show (see Ref.~\onlinecite{handbok},  Eq.$9.5.2$), that $k\leq j_k'$;  consequently we can say that if (\ref{c_max1}) holds for a given $n_{\rm max}$ at $\tau$ and $\tau\leq n_{\rm max}$, then we are in the monotonically increasing region for $J_{n_{\rm max}}$ and $J_{n_{\rm max}+1}$.  In this case, equation (\ref{c_max1}) holds also for any $t\leq \tau$.
 \end{proof}


 \begin{figure}
 \begin{centering}
 \includegraphics[width=0.45\textwidth]{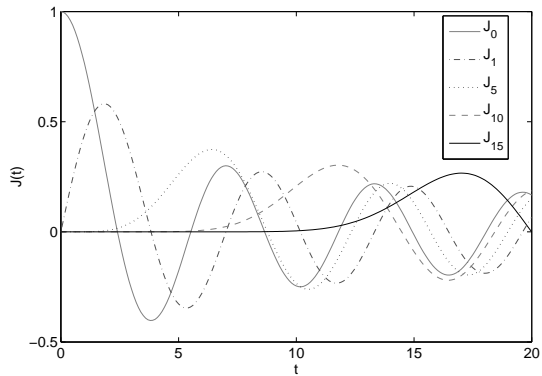}
 \caption{Example of few integer order Bessel Functions of the first kind. } 
 \label{fig:bessel}
 \end{centering}
 \end{figure}

\subsection{Efficient Implementation}
The cost of DEC is all in the first step.   Note that the cost of the evaluation of any $\tilde{T}_k$ itself is roughly equivalent to that of 
a matrix--matrix multiplication, as per the iteration $T_{k+1}(L_s)=2T_k(L_s)-T_{k-1}$ (\ref{Tk}). But what is actually needed in all our calculations is $T_k\varrho_0$. Because of the linearity of the iterative expression, we may multiply $T_0$ and $T_1$ by $\varrho_0$ and then use (\ref{Tk}) directly on $T_k\varrho_0$. The iterated operation in then just a matrix--vector multiplication.

After all the $\tilde{T}_k$ needed have been stored, it is possible to get $\langle Q(t)\rangle$ at any time $t\in[0,\tau]$ by evaluating:
\begin{equation}\label{Qt}
\langle Q(t)\rangle=e^{-i Dt_s} \sum_{k=1}^{m_{max}} c_k(t_s) \tilde{T}_k
\end{equation} 
where the $c_k$ are evaluated iteratively via (\ref{ck}), and $m_{max}$ satisfies (\ref{c_max1}) for $tD$.\\
For the details of the algorithm we refer to the Appendix.
As a final remark we note that if the Hamiltonian is time dependent it is not possible to apply DEC, as it is not possible anymore to isolate the time dependent part out of the trace.

\section{Numerical Experiments}

As in many other physical systems, nuclear spin dynamics provides a perfect example to test DEC, because the final outcome of the simulations is an observable, the free induction decay (FID) signal, and this result is the sole important quantity, as it is the only data available from experiment.

As Hamiltonian we assumed a sum of isotropic chemical shift and the isotropic term of a pair interaction called Homonuclear  J--couplings, that depends on the inner product $I_j\cdot I_k$ \cite{Levitt}:
\begin{equation}
  H=-\sum_{j=1}^n \omega_j^0 I_j^z+\sum_{j,l=1}^n J_{jl} I_j\cdot I_l.
\end{equation}
For the initial density matrix we set $\varrho_0= -I_y$, that is the result of the application of a so called  $x$-pulse to a sample already under the effect of a strong constant magnetic field along the $z$ direction \cite{Levitt}. This is the usual initial condition when the acquisition of  the signal starts.

An illustration of the structure of the Liouvillian matrix is presented in Fig.\ref{fig:spy}. The sparsity depends on the number of interactions among the spins. In most cases the J--coupling interaction matrix $J$ is relatively sparse. In our numerical test  a strong coupling system, where $J$ is dense and each spin interacts with every other spins, was simulated.

\begin{figure}
\centering
\includegraphics[width=0.45\textwidth]{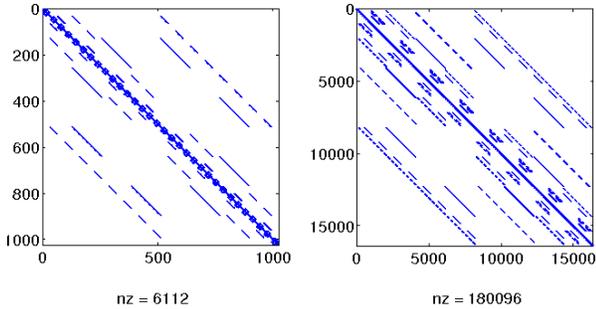}
\caption{Left: Sparse structure of the Liouvillian ($n=1024$, $nz=6112$) for a system of $5$ spins. Right: Structure of the Liouvillian ($n=16384$, $nz=180096$) for a system of $7$ spins. Both the systems are in a weak coupling condition, each spin interacts with approximately half the other spins.}
\noindent
\label{fig:spy}
\end{figure}
%

Due to the fact that our implementation involves only matrix--vector multiplication, techniques developed both for structured and unstructured sparse matrices may be exploited.

For comparison of computational costs we tested this method with an increasing number of spin particles using different methods to evaluate the exponential.
In particular to examine the error we compared DEC with the {\sl expm} function of \textsl{MATLAB}, that uses a scaling and squaring algorithm with  Pade' approximation.  In this way we evaluate once for all $U=e^{-iL dt}$ where $dt$ is the step size of the simulation, and then at each time--step we propagate $\varrho$:
\begin{equation}\label{prop}
 \varrho_{n+1}=U\varrho_n.
\end{equation}
It is well known that in terms of computational costs this simplistic approach performs poorly, so we  compared DEC also with the other methods presented in this paper:  Krylov expansion via Lanczos \cite{sidje},  the Chebyshev expansion, and ZTE \cite{kuprov}.

For the Lanczos method we have used the function \textit{expv} of the package EXPOKIT \cite{sidje} written in \ textsl{MATLAB}.

\subsection{Summary of Results}

\begin{figure}
\begin{centering}
\includegraphics[width=0.45\textwidth]{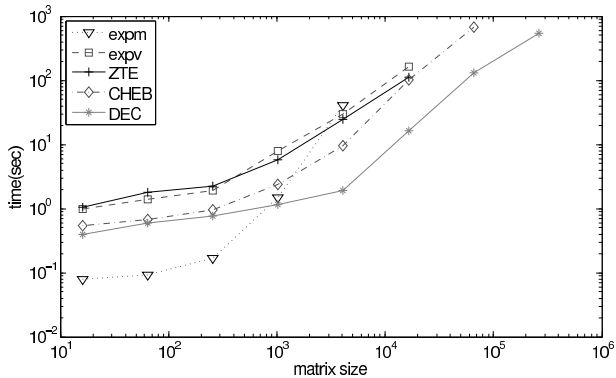}
\caption{Logarithmic comparison of computational costs for Lanczos, Zero Track Pruning (ZTE) and Direct Expectations via Chebyshev (DEC), with $dt=0.1$ $N=1000$.}
\noindent
 \label{fig:loglog}
 \end{centering}
\end{figure}
The error-to-cost (measured in CPU time) diagrams are shown in Figure \ref{fig:loglog} for all the methods described. 
All the numerical tests have been performed on a Dell PowerEdge 1950 wiht 4GB RAM and a DualCore Intel processors running in 32bit mode. The language used is \textsl{MATLAB}.

It is clear that DEC is almost an order of magnitude more efficient than the alternatives. To avoid instabilities coming form the evaluation of the Bessel functions in these numerical tests we set the tolerance to be $\varepsilon = 1e-7$.

DEC performs at its best for short time simulations (i.e. when total time $\tau$ is small), so that we do not need to evaluate a large number of $T_k$, and when at the same time the use of small time step $dt$ is required, as the cost for any step after the first is negligible.
For instance, while for all the other methods the cost of a $1000$ step simulation with $dt=0.1$, is approximately half the cost of a simulation of $1000$ steps with $dt=0.1$, for DEC there is a gain of almost an order of magnitude, see Fig.\ref{fig:loglog1}.

\begin{figure}
\begin{centering}
\includegraphics[width=0.45\textwidth]{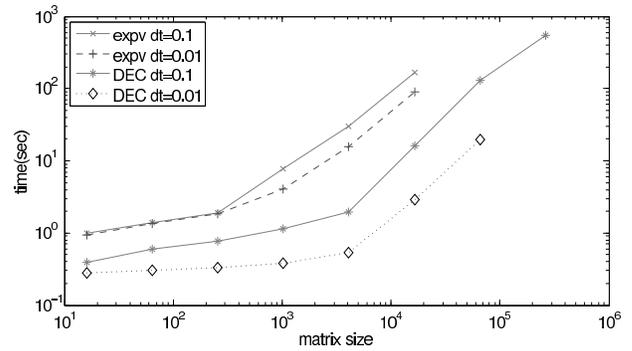}
\caption{Logarithmic comparison of computational costs for Lanczos  and DEC when simulating for the same number of total steps $N$ but with different stepzise $dt$. $N=1000$ in both the cases.}
\label{fig:loglog1}
\end{centering}
\end{figure}

\begin{table}

\vspace*{0.3cm}

\begin{tabular}{|c|c|c|c|cc|c|}\hline
Matrix size &expm & expv & Chebyshev & ZTE & Reduced\footnotemark[1] & DEC \\
\hline

16     & 0.08 & 0.99  &  0.54    & 1.06 &  8      &  0.39    \\
64     & 0.1 & 1.41   &  0.68   & 1.79   &  30     &  0.60     \\
256    & 0.17  & 1.88   & 0.95   & 2.26  &  112    &  0.76     \\
1024   & 1.5  & 7.87   &  2.40  & 5.80  &  420    &  1.14    \\
4096   & 40.84 & 29.78   &  9.60  & 24.51 &  1584    &  1.93   \\
16384  &      & 165.06 &  102.92 & 112.85  &  6006   &  16.11    \\ 
65536  &      &        &   677.22    &      &         &  129.93    \\ 
262144  &      &        &       &       &         &  542.98    \\ 
\hline

\end{tabular} 
\footnotetext[1]{Size of $\varrho$ for the reduced system}
\label{table:comp1}
\caption{Comparison of computational costs, CPU time in seconds, for $dt=0.1$, $N=1000$.}

\end{table}

\section{Conclusion}
In this article we have presented a new method for simulation of an observable in a mixed quantum system. 
By expanding the exponential of the Hamiltonian in Chebyshev polynomials, and exploiting the trace operation performed 
when evaluating the expectation value of an observable, it is possible to reduce the evolution of any observable to a one--dimension function that can be evaluated directly.

We have also presented an optimal algorithm to perform such a calculation, and we have shown how this new method can easily compete in term of computational costs with a variety of model reduction approaches, whilst maintaining the approximation errors below a chosen threshold.

Moreover we have also studied the Zero Track Elimination method  that has appeared in the literature.  We have demonstrated that the ZTE method cannot be a reliable method for every system, but can be a powerful alternative in the restricted setting of NMR simulations.

G.M. is very grateful to Arieh Iserles for useful suggestions at the start of this work.

\appendix
\section{The DEC algorithm} 
We provide here a detailed description of the algorithm.\\
\textsl{
{\it inputs}: $L$ hermitian matrix $n \times n$, $\varrho_0$ vector \\
{\it outputs}: expectation value $f(t)$ evaluated at $f(j \Delta t)$, $j=1,\ldots,N$.
\begin{enumerate}
 \item evaluate $\alpha$,$\beta$ via Lanczos s.t. $\alpha\leq\sigma(L)\leq\beta$
\item scale $L$ and get $L_s$
\item evaluate $T_0= {\rm Id}\varrho_0$, $T_1=L_s\varrho_0$
\item {\bf while} $\|c_k\|< \varepsilon$
\item \hspace{1cm} $c_k=(2-\delta_{k,0})(-i)^k J_k(\tau S)$, $\tau=$ total time 
\item \hspace{1cm} $T_{k+1}=2T_kL_s-T_{k-1}$
\item \hspace{1cm} store $\tilde{T}_k={\rm Trace}\{(T_k\varrho_0)\hat{Q}\}$
\item {\bf end}
\item {\bf for} $j=1:{\rm N}$
\item \hspace{1cm} re-evaluate the $c_k$ at different time $t=j d t$ 
\item \hspace{1cm} $f(j)=\sum c_k \tilde{T}_k$
\item {\bf end}
\end{enumerate}
}
\section{A counter--proof of the ZTE elimination}
\begin{figure}
\begin{centering}
\includegraphics[width=0.4\textwidth]{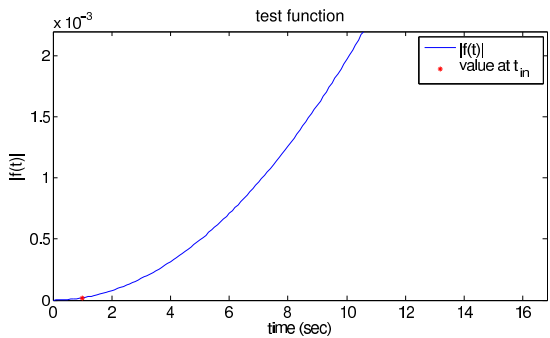}
\caption{$t$ against $|f(t)|$ plot, the dot is the value at time $\delta t$. }
\label{fig:count}
\end{centering}
\end{figure}
We provide a specific counter--example for the Zero Track Elimination.
In Fig.\ref{fig:count} we have plot the absolute value of the function:
\begin{equation}
f(t)=e^{-i2\pi t}-\ha e^{-i 2\pi(1.001)t}-\ha e^{-i 2\pi(0.999)t}.
\end{equation}
If we look at the $\delta t$ evaluated looking at the lowest isolated frequency we have that $\delta t\simeq 1$, within this time $max|f(t)|\simeq 2 \times 10^{-5}$ but clearly the maximum amplitude of this periodic function will be $2$.

\bibliography{plain}

\end{document}